\newcommand{\dom}{\partial_{\bullet}}
\newcommand{\cod}{\partial^{\bullet}}
\newcommand{\ass}[1]{\mathfrak{gr}_{\mathfrak{m}}(#1)}
\newcommand{\ie}{{\it i.e.}}
\newcommand{\cat}{\mathcal{C}}
\newcommand{\cover}{\prec}
\newtheorem{thm}{Theorem}[section]
\newtheorem{theorem}[thm]{Theorem}
\theoremstyle{definition}
\newtheorem{example}[thm]{Example}
\newtheorem{definition}[thm]{Definition}
\newtheorem{remark}[thm]{Remark}
\title[Incidence algebras of shellable posets]{The incidence algebra of posets and acyclic categories}
\author{David Quinn}
\address{David Quinn\\ Queen's University Belfast\\ School of
  Mathematics and Physics\\ Pure Mathematics Research Centre\\ Belfast
  BT7~1NN\\ Northern Ireland\\ UK}
\curraddr{David Quinn\\ University of Aberdeen\\ School of Natural and Computing Sciences\\ 
Institute of Mathematics\\ Fraser Nobel Building\\ Aberdeen AB24~3UE\\ Scotland\\ UK}
\email{davidquinnmath@gmail.com}
\keywords{Poset, acyclic category, incidence algebra, shellable, Groebner basis}
\subjclass[2000]{Primary: 05E15; Secondary: 06A07}
\begin{document}
\begin{abstract}
Acyclic categories were introduced by Kozlov and can be viewed as generalised posets. Similar to posets, 
one can define their incidence algebras and a related topological complex. We consider the incidence algebra of 
either a poset or acyclic category as the quotient of a path algebra by the parallel ideal. We show that this ideal 
has a quadratic Gr\"obner basis with a lexicographic monomial order if and only if the poset or acyclic category is 
lex-shellable.
\end{abstract}
\maketitle
\section{Introduction}
It has been shown independently by Polo \cite{polo} and Woodcock \cite{woodcock} that for a graded finite poset 
$P$ there is an equivalence between the incidence algebra being Koszul and the order complex $\Delta P$ being Cohen-Macaulay. 
Recently Reiner and Stamate \cite{reinerstamate} have proved that the graded hypothesis is largely unnecessary by showing a 
similar equivalence between non-graded Koszul and sequentially Cohen-Macaulay. Here we define the incidence algebra as a 
quotient of a quiver with an ideal called the parallel ideal. Our first result shows that, for a not necessarily graded 
finite poset, the parallel ideal has a quadratic Gr\"obner basis for a lexicographic monomial order if and only if $P$ 
is lex-shellable. 
Since shellability implies Cohen-Macaulay an even more striking relationship between the combinatorial and topological properties of the poset and the algebraic 
properties of the incidence algebra.

The most complete account of acyclic categories can be found in \cite{koz} where most of the theory first appears. 
Acyclic categories can be thought of as a generalisation of posets and in analogy we can define incidence algebras 
and generalised order complexes for acyclic categories.  We collect the necessary definitions at the beginning of 
Section \ref{ac}. This is not intended as an introduction to acyclic categories but merely an attempt to be self contained. 
We will show that there is no extra difficulty in extending our result to lex-shellable acyclic categories.

\section{Incidence algebras of lex-shellable posets}
\subsection{Shellable and lex-shellable posets}

We say that a poset $P$ is bounded if there exists $a,\,b\in P$ such that $a\leq x \leq b$ for all
$x\in P$. The elements $a,b$ are refereed to the {\em minimal} and {\em maximal} elements of $P$ respectively. 
For any poset $P$ we construct the {\em augmented} poset $\hat{P}$ by adjoining two new elements $\hat{0},\hat{1}$ to $P$ 
where $\hat{0}$ is the minimal element of $\hat{P}$ and $\hat{1}$ is the maximal element of $\hat{P}$.

We shall work with bounded posets throughout. The definition of a lex-shellable poset $P$ typically gives a
condition on the augmented poset $\hat{P}$, however, since we take $P$ to be bounded we may apply the condition directly to $P$.

Recall that the Hasse diagram of a poset $P$ is a directed graph in which the vertices
are the elements of $P$ and there exists a directed edge $x\rightarrow y$ whenever $y$ 
covers $x$, \ie, there does not exist $z\in P$ such that $x<z<y$. 
We use the notation $x\cover y$ to denote that $y$ covers $x$.

We work with lex-shellable posets however it is important to note the more general notion of shellable posets.

\begin{definition}\label{ordercomplex}
For a poset $P$ we define its {\em order complex} of $\Delta P$ to be the abstract simplicial 
complex with a face $\{x_1,\dots,x_k\}$ for $x_1 < \dots < x_k$ a chain in $P$.
\end{definition}

\begin{definition}\label{def:shell}
A (generalised) simplicial complex $\Delta$ is {\em shellable} if the facets, $F_i$, of $\Delta$ can be linearly 
ordered such that for every pair $(F_i,F_j)$ where $1\leq i < j$ there exists some 
$k \leq j$ such that $F_i \cap F_j \subset F_k \cap F_j = F_j \backslash \{x\}$, $x\in F_j$.
\end{definition}

In order to define lex-shellability we introduce edges labellings. An {\em edge labelling} of $P$ is a map from the set of edges 
of that Hasse diagram of $P$ to some poset $\Lambda$, such a labelling induces a labelling on the paths 
of the Hasse diagram as sequences of elements of $\Lambda$. We assume the {\em prefix condition}, that is,
given any two maximal paths $p_1,\,p_2$ in some closed interval $[x,y]$ of $P$, the labelling of $p_1$ is not a prefix of the labelling
of $p_2$. 

\begin{definition}\label{lexcondition}
An edge labelling of a poset $P$ is a LEX-labelling if it satisfies the LEX-condition, given as follows:
For any interval $[x,y]$, any maximal chain $C$ in $[x,y]$, and any $s,t \in C$ such that 
$x<s<t<y$, if $C|_{[x,t]}$ is lexicographically least in $[x,t]$ and $C|_{[s,y]}$ is 
lexicographically least in $[s,y]$, then $C$ is lexicographically least in $[x,y]$.
\end{definition}

It will be more convenient to use an equivalent condition known as
as the {\em short bad subchain} condition, or SBS-condition, which we define as follows.

\begin{definition}\label{def:sbs}
An edge labelling of a poset $P$ is a LEX-labelling if it satisfies the SBS-condition, given as follows:
For any interval $[x,y]$, any maximal chain $C$ in $[x,y]$, $C$ is not lexicographically 
least in $[x,y]$ then there exists $r,s,t\in C$ with $r\cover s \cover t$ such that
and $C|_{[r,t]}$ is not lexicographically least in $[r,t]$.
\end{definition}

We say that a poset is lex-shellable if there exists a LEX-labelling, that is, 
if there exists an edge labelling of $P$ which satisfies the equivalent conditions
from Definitions \ref{lexcondition} and \ref{def:sbs}, note that this implies shellability as in 
Definition \ref{def:shell}. A proof of the equivalence of these conditions can be found in \cite{koz}.

\subsection{Incidence algebras}
For a path $p$ in the Hasse diagram we define the {\em domain} of $p$, denoted $\dom p$ to be 
the initial vertex (or starting vertex) of the path $p$. Dually we define the {\em codomain}, denoted $\cod p$,
to be the terminal vertex (or destination vertex) of $p$.  
The quiver of a poset $P$ is the $k$ algebra $\Gamma_k[P]$ with $k$ basis given by the paths 
in the Hasse diagram of $P$. Multiplication is given by concatenation of paths when defined 
and is zero otherwise, \ie, if $p,q$ are paths the Hasse diagram of $P$ then 
\[p\circ q=\begin{cases}
      pq & \cod p=\dom q \\
      0  & \mathrm{otherwise}.
     \end{cases}\]
We shall omit $k$ from our notation so that the quiver will be denoted $\Gamma[P]$.

\begin{definition}
The {\em parallel ideal} $I_P$ is the ideal of $\Gamma[P]$ generated by the relations 
$p_1 - p_2$ whenever $\dom p_1 = \dom p_2$ and $\cod p_1 = \cod p_2$. The {\em incidence algebra} 
$k[P]$ is the quotient of $\Gamma[P]$ by the parallel ideal. 
\end{definition}

Let $\text{Int}(P)$ be the set of closed intervals of $P$. The incidence algebra 
$k[P]$ may also be viewed as the $k$-vector space with basis $\{\xi_{[x,y]}\}$ for 
$[x,y] \in {\text{Int}(P)}$ and multiplication defined by
\begin{displaymath}
\xi_{[x,y]}\xi_{[z,w]}=\delta_{yz}\xi_{[x,w]}
\end{displaymath}
where $\delta_{yz}$ is the Kronecker delta. 
Also, we define $\xi_{x}:=\xi_{[x,x]}$ which has degree $0$.

A monomial of the quiver $\Gamma[P]$ 
of degree $n>0$ is given by a path $x_0 \rightarrow \dots \rightarrow x_n$ 
of length $n$. We represent this a little more succinctly as $\xi_{[x_0,x_1]}\dots \xi_{[x_{n-1},x_n]}$
with $x_i \prec x_{i+1}$ a covering relation and each $\xi_{[x_i,x_{i+1}]}$ has degree $1$.
Note that $k[P]$ can be decomposed as $\Gamma_0 \oplus \mathfrak{m}$ where 
$\Gamma_0$ is the $k$ vector space with basis $\{\xi_x \mid x\in P\}$ and $\mathfrak{m}$
is the $k$-span of the intervals $\{ \xi_{[x,y]} \mid x<y\}$.

We say that a poset is {\em graded} if and only if all maximal chains have the same 
length. In that case $I_P$ is a homogeneous ideal and the incidence algebra is 
$\mathbb{N}$-graded with 
$\text{deg}{\xi_{[x,y]}}=\text{length}([x,y])$. We will work with not necessarily graded 
posets and algebras. In order to relate our results to those of Reiner and Stamate \cite{reinerstamate} we must 
consider the associated graded algebra $\ass{k[P]}$ which is defined as follows.
\[\ass{k[P]} = \Gamma_0 \oplus \mathfrak{m} / \mathfrak{m}^2 \oplus \mathfrak{m}^2 / \mathfrak{m}^3 \oplus \dots\]
Starting from our definition of the incidence algebra as the quotient $k[P]=\Gamma[P]/I_P$ the associated graded algebra
$\ass{k[P]}$ will be a $k$-vector space on the same basis as $k[P]$, namely the closed intervals of $P$,
however the algebraic structure will be different. The degree of $\xi_{[x,y]}$ in $\ass{k[P]}$ is the maximal length of the 
maximal chains in $[x,y]$, which we denote $\deg[x,y]$, and the product is given as follows.
\[\xi_{[x,y]} \xi_{[w,z]} = \begin{cases}
                           \xi_{[x,y]} & y=w \text{ and } \deg[x,y]+\deg[w,z]=\deg[x,z] \\
                            0          & \text{otherwise}.  
                          \end{cases}\]
We wish to express $\ass{k[P]}$ as a quotient 
$\ass{\Gamma[P]}/J={\Gamma[P]}/J=$ where $J$ is the kernel of the natural map $\Gamma[P]\rightarrow \ass{k[P]}$.
If a monomial $p$ in $\Gamma[P]$ corresponds to a path which is not maximal within the relevant closed interval then $p\in J$.
The ideal $J$ also contains binomials $q-q'$ where $q$ and $q'$ correspond to maximal paths within the same closed interval. 
Thus $J$ is the ideal generated by $\{p,q-q'\}$.

If $f\in I_P$ is homogeneous, \ie, $f \in \mathfrak{m}^i{k[P]}/\mathfrak{m}^{i+1}{k[P]}$ for some $i>1$ then clearly $f \in J$.

We consider the ideal $J'=\{t(f) \mid f \in I_p\}$ where $\mathrm{t}(f)$ is the {\em truncation} of $f$ defined as follows.
\[t(f)= f\cap \mathfrak{m}^i/\mathfrak{m}^{i+1}\]
where $i$ is such that $f \in \mathfrak{m}^i$ but $f \notin \mathfrak{m}^{i+1}$.
In other words, the truncation of $f$ is the sum of the terms of least degree in $f$.

Certainly $J \subset J'$ as each generator of $J$ is the truncation of some binomial in $I_P$.
 $I_P$ is an element of $J$. Note that any $f\in I_P$ can be expressed as a finite sum of 
binomials $\sum_i p_i-q_i$ where each pair $p_i,\,q_i$ 
correspond to paths sharing the same domain and codomain. We may assume that each $t(p_i-q_i)$ contributes to $t(f)$,
and it follows that $t(f)\in J$ and so $J'=J$. 

The monomial order we employ to find a Gr\"obner basis for $I_P$ will be made explicit in Definition \ref{def:lexorder}. 
This order has the property that monomials of lower degree are ordered first, thus the initial monomials 
$\mathrm{in}(f) = \mathrm{in}(t(f))$ 
and if $\mathcal{G}$ is a Gr\"obner basis for $I_P$ then $t(\mathcal{G})$
will be a Gr\"obner basis for $J$. In particular $\mathrm{in}(f)$ is quadratic for all $f \in \mathcal{G}$ if and only if
$t(\mathcal{G})$ is quadratic in the usual sense. For this reason we make the following definition.

\begin{definition}
Let $\mathcal{G}$ be a Gr\"obner basis for $I_P$ with $P$ not necessarily graded. We say that $\mathcal{G}$
is {\em quadratic} if $t(\mathcal{G})\subset \mathfrak{m}^2/\mathfrak{m}^3$.
\end{definition}

For our Gr\"obner basis we will consider a lexicographic order for which the monomials of 
lower degree come later in the order than those of higher degree. Although this is only necessary when 
$P$ is non-graded, we take this order so that we can treat both cases at once. 

A LEX-labelling gives a total order on the maximal chains of the poset, this is equivalent to a total order on the 
corresponding `maximal' monomials of the incidence algebra. We wish to extend this to a lexicographic monomial
order on the monomials of the incidence algebra. For a monomial $w$ we denote $l(w)$ to be
the LEX-labelling of chain corresponding to $w$. 
If the LEX-labelling is injective then we order the monomials 
of the incidence algebra so that for monomials $w$, $v$ we set $w>v$ if:
\begin{itemize}
 \item $\mathrm{deg}\,w < \mathrm{deg}\,v,$ or
 \item $\mathrm{deg}\,w = \mathrm{deg}\,v$ and $l(w) > l(v)$.
\end{itemize}

In general, a LEX-labelling might not give a total order on the monomials
of the incidence algebra, we break any ties by using the following order.

\begin{definition}\label{def:lexorder}
 Given a lex-shelling of a poset $P$ we construct a well order $<$ on the monomials of $\Gamma[P]$ as follows. 
 We label the maximal chains by the total order given by the lex-shelling so that $C_i$ is the $i$th maximal chain. 
 To each maximal chain $C_i$ there exists a corresponding `maximal' monomial $m_i$. For any monomial $w$ 
 we define the {\em carrier} of $w$ to be earliest chain $C_i$ in the shelling such that $w$ divides $m_i$. 
 We denote the carrier of $w$ by $C_w$.

 For monomials $w$, $v$ we set $w>v$ if:
\begin{itemize}
 \item $\mathrm{deg}\,w < \mathrm{deg}\,v,$ or
 \item $\mathrm{deg}\,w = \mathrm{deg}\,v$ and $l(w) > l(v)$, or
 \item $\mathrm{deg}\,w = \mathrm{deg}\,v$ and $l(w) = l(v)$ and $C_w < C_v$, or
 \item $\mathrm{deg}\,w = \mathrm{deg}\,v$ and $l(w) = l(v)$ and $C_w = C_v$  and $\dom w>\dom v $
\end{itemize}
 where the order on the domains is that of $P$.

 We call a monomial order for which $\deg\, w < \deg\, v$ implies $v<w$ a {\em negative degree monomial order} as
 this condition can be given as $-\!\deg\, v < -\!\deg\, w$ implies $v<w$.
\end{definition}

Let $w$ and $v$ be monomials (of the same degree) assigned the same label by the LEX-labelling. We assume $w<v$ as determined
by the carrier chains $C_w < C_v$. Given some monomial $u$ such that $uw$ and $uv$ are non-zero we must ensure that $uw<uv$.
Note that the domains of $w$ and $v$ must be the same since $uw$ and $uv$ are both non-zero.
The carrier of $w$ consists of the union of the lexicographically least maximal chain in the interval $[\hat{0},\dom w]$, the maximal
chain in $[\dom w, \cod w]$ corresponding to $w$ and the lexicographically least maximal chain in $[\cod w, \hat{1}]$. Similar 
holds for $C_v$ thus the LEX-labelling of $C_w|_{[\hat{0},\cod w]}$ is the same as the LEX-labelling of $C_v|_{[\hat{0},\cod v]}$.
Similarly we can show that the LEX-labelling of $C_{uw}|_{[\hat{0},\cod w]}$ is the same as the LEX-labelling of $C_{uv}|_{[\hat{0},\cod v]}$.
The chains $C_w|_{[\cod w,\hat{1}]}$ and $C_{uw}|_{[\cod w,\hat{1}]}$ agree, and similarly $C_w|_{[\cod w,\hat{1}]} = C_{uw}|_{[\cod w,\hat{1}]}$.
The order $C_w <C_v$ must come from the LEX-labelling of the chains in the previous sentence and it follows that $C_{uw}<C_{uv}$.
With the same hypothesis on $w$ and $v$ and with $wu$ and $vu$ non-zero we can show $wu<vu$ with a similar argument.

\begin{definition}\label{def:neglexorder}
Given a labelling of the Hasse diagram of a poset $P$ which satisfies the prefix condition, a {\em negative degree lexicographic monomial 
order} for $\Gamma[P]$ is any
negative monomial order such that the order on the monomials of equal degree is a refinement of the lexicographic order with 
respect to the labelling.
\end{definition}
 
Note that since the labelling must satisfy the prefix condition no two distinct maximal chains in the same closed interval can
be labeled the same. The well order given in Definition \ref{def:lexorder} is a negative degree lexicographic monomial order. 
If $w$, $u$, $v$, $s$ are monomials in $\Gamma[P]$ then this order satisfies:
\begin{enumerate}
 \item If $w<u$ and $vws\neq0\neq vus$ then $vws<vus$.
 \item If $u=vws\neq 0$ then $u<w$.
\end{enumerate}
Note that the second condition is the reverse of the usual condition used for monomial orders, 
yet the monomial order is a well order as there are only finitely many paths in the Hasse diagram of $P$.

\begin{example}\label{ex:order}
We consider the lex-shellable poset given by the Hasse diagram in Fig.~\ref{fig:gbsh}, where we also give an (injective) 
$\mathbb{N}$-labelling of the edges. The Gr\"obner basis for the parallel ideal is as follows.
\begin{displaymath}
 \mathcal{G}=\{
\xi_{[a,d]}\xi_{[d,g]}-\xi_{[a,b]}\xi_{[b,e]}\xi_{[e,g]},\,
\xi_{[b,e]}\xi_{[e,g]}-\xi_{[b,f]}\xi_{[f,g]},\,
\xi_{[a,b]}\xi_{[b,f]}-\xi_{[a,c]}\xi_{[c,f]}
\}
\end{displaymath}
Note that the order we use forces the quadratic term of $\xi_{[a,d]}\xi_{[d,g]}-\xi_{[a,b]}\xi_{[b,e]}\xi_{[e,g]}$
to be its initial term. For the remaining elements of the Gr\"obner basis the order comes from the labelling.
The initial terms are all quadratic so we have a quadratic Gr\"obner basis.
\end{example}
\begin{figure}
\[
\xymatrix{
{}                           &{}&{\mathbf{g}}\ar@{-}[dddll]_{9} \ar@{-}[dd]_{7} \ar@{-}[ddrr]^{3} &{}&{}                             \\
{}                           &{}&{}                                                               &{}&{}                             \\
{}                           &{}&{\mathbf{e}}\ar@{-}[dd]_6                              &{}&{\mathbf{f}}\ar@{-}[dd]^2\ar@{-}[ddll]_5 \\
{\mathbf{d}}\ar@{-}[dddrr]_8 &{}&{}                                                               &{}&{}                             \\
{}                           &{}&{\mathbf{b}}\ar@{-}[dd]_4                                        &{}&{\mathbf{c}}\ar@{-}[ddll]^1    \\
{}                           &{}&{}                                                               &{}&                               \\
{}                           &{}&{\mathbf{a}}                                                     &{}&                         
}
\]
\caption{Lex-shelling}
\label{fig:gbsh}
\end{figure}
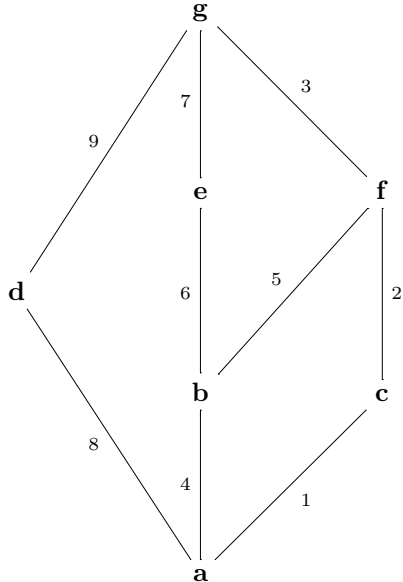

\subsection{The main result}

\begin{theorem}\label{thm:poset}
The parallel ideal $I_P$ of a finite poset $P$ has a quadratic Gr\"obner basis with a negative degree lexicographic monomial 
order if and only if $P$ is lex-shellable.
\end{theorem}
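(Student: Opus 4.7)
My plan is to prove both implications by connecting the normal forms under a negative degree lexicographic Gr\"obner basis with the SBS condition of Definition~\ref{def:sbs}.

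For the ``if'' direction, I assume $P$ has a LEX-labelling $\lambda$ and use the induced monomial order from Definition~\ref{def:lexorder}. I first establish the key lemma that for every interval $[x,y]$ the lexicographically least maximal chain $C^{*}_{[x,y]}$ has length $\deg[x,y]$ and each of its length-2 subchains $r \cover s \cover t$ is itself lex-least in $[r,t]$ with $\deg[r,t]=2$. The length-maximality is proved by starting from any chain of length $\deg[x,y]$ and iteratively applying the contrapositive of SBS to replace bad length-2 subchains by the lex-least chains in their intervals, which by induction on interval size are themselves of maximum length, so the evolving chain preserves length $\deg[x,y]$ while strictly decreasing lex-value. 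The Gr\"obner basis I propose is the set $\mathcal{G}$ of all binomials $m - C^{*}_{[\dom m,\cod m]}$ for length-2 monomials $m \neq C^{*}_{[\dom m,\cod m]}$; each such binomial lies in $I_P$, and each has the quadratic initial term $m$.

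To verify that $\mathcal{G}$ is a Gr\"obner basis, I show that the monomials avoiding $\langle\mathrm{in}(\mathcal{G})\rangle$ are in bijection with the basis $\{\xi_{[x,y]}\}$ of $k[P]$. A monomial $w$ avoids the initial ideal iff each of its length-2 subchains equals $C^{*}$ in its 2-interval; combining this with the contrapositive of SBS forces $w = C^{*}_{[\dom w,\cod w]}$, giving exactly one normal form per interval and so matching $\dim k[P]$.

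For the converse, assume $I_P$ has a quadratic Gr\"obner basis for a negative degree lex order coming from a labelling $\lambda$, and I aim to show $P$ admits a LEX-labelling. The normal forms of $\mathcal{G}$ biject with intervals, and because the order is negative degree, the normal form $N_{[x,y]}$ of $[x,y]$ must be a chain of length $\deg[x,y]$. I verify SBS by contradiction: if some maximal chain $C$ in $[x,y]$ is not lex-least but has every length-2 subchain lex-least in its 2-interval, I first argue that each such 2-interval $[r,t]$ must have $\deg[r,t]=2$, since otherwise a longer parallel chain would be smaller than $r \cover s \cover t$ in the negative degree order, placing the length-2 subchain in the initial ideal and destroying its normal-form status. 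Hence $m_C$ is a normal form, so $C = N_{[x,y]}$; the existence of a lex-less chain combined with the length-maximality of $N_{[x,y]}$ and the prefix condition then yields the required contradiction.

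The main obstacle is the converse direction: the monomial order prefers longer chains as smaller while SBS compares labels lexicographically without weighting by length, so the two notions of ``least'' do not obviously coincide. The crucial step forcing each relevant 2-interval to be of degree $2$ is where the quadratic Gr\"obner basis hypothesis is critically used, and reconciling these two notions throughout each interval is the delicate heart of the argument.
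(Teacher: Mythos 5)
Your ``if'' direction is correct but takes a genuinely different route from the paper. The paper argues top--down: it takes any at-least-cubic monomial $m$ that is an initial term of an element of $I_P$, notes that the corresponding chain $F_m$ is not lexicographically least in its interval, and invokes the SBS-condition to produce a quadratic divisor of $m$ that is again an initial term; hence the reduced Gr\"obner basis is quadratic. You instead exhibit an explicit candidate basis (the binomials $m-C^{*}_{[\dom m,\cod m]}$ for bad length-$2$ monomials $m$) and certify it by counting normal monomials against $\dim k[P]=|\mathrm{Int}(P)|$, using the contrapositive of SBS to show there is at most one normal monomial per interval. Both work; yours is longer but buys something real: your key lemma that the lex-least maximal chain of every interval has length $\deg[x,y]$ is exactly what is needed to reconcile ``lexicographically least'' with ``minimal in the negative degree order,'' a point the paper's forward direction uses implicitly (``the monomial order coincides with the shelling order'') without proof. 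The iterative SBS-replacement argument you give for that lemma is sound.

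The converse direction has a genuine gap, and it sits exactly where you say the delicate heart is. First, your argument that each $2$-interval $[r,t]$ along $C$ has $\deg[r,t]=2$ is circular: you rule out $\deg[r,t]>2$ because it would ``destroy the normal-form status'' of the length-$2$ subchain, but that subchain has not been shown to be a normal form --- being lex-least among the maximal chains of $[r,t]$ does not make it minimal in a negative degree order when a longer parallel chain exists. Second, and more seriously, the final step produces no contradiction. Consider $P=\{x,a,b,c,z\}$ with covers $x\cover a\cover z$ and $x\cover b\cover c\cover z$, labelled $\lambda(x\cover a)=0$, $\lambda(a\cover z)=1$, $\lambda(x\cover b)=2$, $\lambda(b\cover c)=3$, $\lambda(c\cover z)=4$. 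The parallel ideal is generated by $\xi_{[x,a]}\xi_{[a,z]}-\xi_{[x,b]}\xi_{[b,c]}\xi_{[c,z]}$, whose initial term under the negative degree order is the quadratic monomial, so this order gives a quadratic Gr\"obner basis. Yet the chain labelled $(2,3,4)$ is not lex-least in $[x,z]$ (the shorter chain $(0,1)$ is lex-smaller), while both of its length-$2$ subchains are the unique, hence lex-least, maximal chains of their $2$-intervals: the SBS-condition fails for this labelling. Here $m_C$ is indeed the max-length normal form and the prefix condition holds, and no contradiction is available --- so the strategy of showing that \emph{the given} labelling is a LEX-labelling cannot succeed in general. (The poset is still lex-shellable, but only after relabelling so that the long chain comes first.) For what it is worth, the paper's own proof of this direction asserts at the corresponding step that a non-lex-least chain always corresponds to an initial term of $I_{[x,z]}$, which this example also contradicts; closing the converse requires either constructing a new labelling from the order or interpreting ``lexicographically least'' degree-first, and your proposal, like the paper's argument, does not supply that step.
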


\begin{proof}
($\impliedby$) We assume that $P$ is lex-shellable and fix some lex-shelling. We take a monomial 
order as described in Definition \ref{def:lexorder}.

The initial ideal of $I_P$ will be minimally generated by monomials arising from paths in the directed Hasse 
diagram. Each such monomial will be the initial term of at least one binomial in $I_P$. Let $m$ be a 
monomial which is the initial term of some element of $I_P$, and assume that $m$ is at least cubic. 
Let $F_m$ be the chain corresponding to $m$ 
(equivalently, $F_m$ is the face of $\Delta(P)$ corresponding to $m$). 
We take the closed interval in which $F_m$ is a maximal chain. Since the 
monomial order coincides with the shelling order $F_m$ cannot be lexicographically least in this 
interval. [Note that the shelling is written with the least element appearing first, while the leading monomial is
the greatest with respect to the order.] By the SBS-condition there exists a path $x\cover y \cover z$ such that $F_m|_{[x,z]}$ is not 
lexicographically least in $[x,z]$. The restriction $F_m|_{[x,z]}$ corresponds to a quadratic monomial which, by the lexicographic order,
is the initial term of some element of $I_P$, thus $m$ is divisible by a quadratic initial term and the monomial order
gives a quadratic Gr\"obner basis for $I_P$. 

($\implies$) Now assume negative degree lexicographic monomial order for $\Gamma[P]$ 
which gives a quadratic Gr\"obner basis for $I_P$. This monomial order arises from some 
labelling of the of the Hasse diagram of $P$ which satisfies the prefix condition.
We only compare monomials corresponding to maximal chains from the same closed interval and the 
labelling gives a total order on the sets of maximal chains from the same closed interval. Any choice made 
to give a negative lexicographic monomial order on $\Gamma[P]$ does not influence the proof. 
We will show that this labelling is in fact a LEX-labelling by showing that it satisfies the SBS-condition.  

Denote by $\mathcal{G}_{[x,z]}$ the Gr\"obner basis of the parallel ideal of the interval $[x,z]$ 
with respect to the monomial order for $\Gamma[P]$ restricted to $[x,z]$.
Each $\mathcal{G}_{[x,z]}$ must also be quadratic  
as the ideal $I_{[x,z]}$ is generated by a subset of the generators of $I_P$, and any element of $\mathcal{G}_{[x,z]}$ with 
non-quadratic initial term would also be in $\mathcal{G}$. This is because no quadratic monomial dividing it 
could be the initial term of any polynomial in $I_P$ not occurring in $I_{[x,z]}$.

Let $C$ be a maximal chain in the interval $[x,z]$ 
which is not lexicographically least in $[x,z]$. Thus $C$ corresponds to a monomial $m$ which is the initial term of some element
of $I_{[x,z]}$. Since we have a quadratic Gr\"obner basis, $m$ must be divisible by a quadratic monomial which is the initial term of some
binomial in $I_{[x,z]}$, call this quadratic monomial $q$, and note that $q$ corresponds to a maximal chain $F$ in some interval
$[y,w]\subset[x,z]$. As $q$ is the initial term of some binomial in $I_{[y,w]}$, the maximal chain $F$ is not lexicographically least in
$[y,w]$, thus the chain $C$ satisfies the SBS-condition.
\end{proof}

\begin{remark}
There is potential that a similar result holds in the case of shellable but not lex-shellable posets. Examples of such posets
can be found in \cite{nonshellwachs,walker}. 
A given shelling might not be consistent with a monomial order but it may always be possible to construct 
a shelling which is consistent with some monomial order. For a non lex-shellable poset the monomial order would not be lexicographic.

If we assume the existence of a non-lexicographic monomial order such that the Gr\"obner basis of the parallel ideal is quadratic then
it is possible to show that the poset is shellable with a shelling given by the induced order on the maximal chains. 
It is unknown if there exists such a monomial order, \ie, for $P$ shellable but not lex-shellable does there exist a 
(necessarily non-lexicographic) monomial order for $\Gamma[P]$ which gives a quadratic Gr\"obner basis for the parallel ideal.
\end{remark}

\section{Incidence algebras of lex-shellable acyclic categories}\label{ac}
\subsection{Acyclic categories}
We open this section by introducing acyclic categories. Much of this follows \cite{koz} with only minor notational differences. 
We consider only finite augmented acyclic categories, that is, acyclic categories with finitely many objects 
and morphisms, and also initial and terminal objects. This is similar to our consideration of finite bounded posets.

\begin{definition}
An {\em acyclic category} $\cat $ is a small category in which only the identity morphisms have inverses and any 
morphism from an object to itself is an identity morphism. The category is {\em finite} if the class of objects $\mathcal{O}(\cat)$, 
and all $\mathrm{hom}(x,y)$ are finite sets. The category is {\em augmented} if it has both initial and terminal objects.
\end{definition}

This generalises posets in the sense that a poset is an example of an acyclic category.

\begin{example}
A poset $P$ is an acyclic category whose objects are the elements of $P$ and whose morphism sets $\text{hom}(x,y)$ 
contain precisely one element if and only if $x\leq y$ in $P$, and are empty otherwise.
\end{example}

The notion of an order complex generalises also (often referred to as the nerve of the category), in this case we 
do not have an abstract simplicial complex unless $\cat$ is a poset. Following Kozlov \cite{koz} we refer to these 
complexes as generalised simplicial complexes. A generalised simplicial complex consists of simplices but unlike a simplicial
complex each $k$-face may not determined by its $(k-1)$-faces. A simple example is the generalised simplicial complex with two 
$0$-faces and two $1$-faces having the same boundary.

\begin{definition}\label{deltac}
 The {\em nerve} of an acyclic category is the generalised simplicial complex with vertex set given by the objects of $\cat$ (more properly
the $0$-faces are the identity morphisms for each object). The $1$-faces are the morphisms $m_1$, the boundary 
of $m_1$ is $\{\mathrm{id}_{\dom m_1},\mathrm{id}_{\cod m_1}\}$. 
For $k>1$, the $k$-faces are given by chains of 
composable (non-identity) morphisms $m_1 \circ m_2 \circ \dots \circ m_k$. The boundary of each face is given by
\begin{enumerate}
 \item $m_2 \circ \dots \circ m_k$,
 \item $m_1 \circ \dots \circ (m_i \circ m_{i+1}) \circ \dots \circ m_k$,
 \item $m_1 \circ m_2 \circ \dots \circ m_{k-1}$,
\end{enumerate}
where in ($2$) $(m_i \circ m_{i+1})$ represents the single morphism given by the composition, and as such is also 
a chain of length $k-1$.
\end{definition}

For example, the nerve of the acyclic category from Example \ref{exac} and Figure \ref{acfig} has a pair of 
$2$-simplices sharing the edges corresponding to the morphisms $\beta$ and $\beta \circ \alpha_i$ (where this 
is considered as a morphism rather than a chain), in addition it has a maximal $1$-face corresponding the morphism 
$\gamma$. Colloquially, in this example the nerve is a cone with a handle on its side. 
  
The {\em indecomposable morphisms} are the non-identity morphisms which cannot be written as the composition of 
two non-identity morphisms. The indecomposable morphisms in a poset are the covering relations. 

For each finite acyclic category $\cat $ we can construct a directed graph. The vertices are given by the 
objects $\mathcal{O(C)}$ and there exists a directed edge $x\rightarrow y$ for each element of $\text{hom}(x,y)$. 
This is related to the directed Hasse diagram of a poset, however in that case we only had edges for covering 
relations since the morphisms were uniquely determined by their domain and codomain. Similarly, in a poset, 
the maximal chains in a closed interval $[x,y]$ correspond to the decompositions of the unique morphism in 
$\text{hom}(x,y)$ into indecomposable morphisms. For acyclic categories the `intervals' are the morphisms and 
their decompositions into indecomposable morphisms. 

\begin{example}\label{exac}
Figure \ref{acfig} represents the acyclic category with three objects and four indecomposable morphisms 
$\{\alpha_1,\alpha_2,\beta,\gamma\}$ where $\beta \circ \alpha_1=\beta \circ \alpha_2 \neq \gamma$. Note 
that this is not an augmented acyclic category, and also we have omitted edges for the identity morphisms.
\end{example}

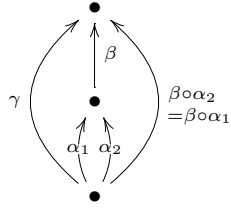
\begin{figure}
 \[
 \xymatrix{
\bullet \\
\bullet \ar[u]_\beta \\
\bullet \ar @/_/[u]|{\alpha_2} \ar @/^/[u]|{\alpha_1} \ar @/_2pc/[uu]_(0.53){\beta \circ \alpha_2}_(0.44){=\beta \circ \alpha_1} \ar @/^2pc/[uu]^{\gamma}
}
\]
\caption{Acyclic category}
\label{acfig}
\end{figure}

\subsection{Lex-shellable acyclic categories}
Many of the definitions and ideas we use from this point forward are simple generalisations of those we introduced when working
with posets. Recall that an edge of a poset $P$ was given by a map from the set of edges of the Hasse diagram of $P$ to another 
poset $\Lambda$. These edges correspond to the cover relations of $P$. For an acyclic categorie $\cat$ we consider 
indecomposable morphisms in the place of cover relations and a {\em labelling} of $\cat$ will be a map from the set of indecomposable
morphisms to a poset $\Lambda$. We assume that our edge labellings satisfy a similar prefix condition as we had for posets.

Rather than using the directed graph with an edge for each morphism, it will be more convenient for us to consider the 
directed graph on $\mathcal{O}(\cat)$ with a directed edge for each indecomposable morphism only.
Note that this does contain enough information to define the category. 

For posets we ordered the maximal chains of a closed interval by the lexicographic order induced by the edge labelling. For acyclic
categories we order the chains of indecomposable morphisms which compose to a given morphism. We give the SBS-condition for acyclic 
categories in Definition \ref{def:sbsac} but first we note that since the categories we work with are augmented, \ie, they have
both initial and terminal objects, the maximal chains of indecomposable morphisms within $\cat$ all compose to the unique morphism 
from the initial object to the terminal object. The lexicographic order then gives total order on the maximal simplices of the nerve
which is a lex-shelling if the SBS-condition is satisfied.

\begin{definition}\label{def:sbsac}
An edge labelling of an augmented acyclic category $\cat$ is a LEX-labelling if it satisfies
the {\em SBS-condition}, given as follows:
For any morphism $m$, and any maximal chain $C=m_1\circ \dots \circ m_k$ in $m$, if $C$ is not lexicographically 
least in $m$ then there exists a subchain $m_i\circ m_{i+1}$ of $C$ composing to $m'$ such that 
$m_i\circ m_{i+1}$ is not lexicographically least in $m'$.
\end{definition}

A finite augmented acyclic category is {\em lex-shellable} if it there exists such a LEX-labelling.

\subsection{Incidence algebras} 
From the directed graph on $\mathcal{O}(\cat)$ with a directed edge for each indecomposable morphism we can construct a 
graded quiver $\Gamma_k[\cat]$ over a field $k$. As before we shall omit $k$ from our notation. 

\begin{remark}
If $\cat$ were actually a poset then, in the notation we use, products in $\Gamma[\cat]$ are in reverse of $\Gamma[P]$. 
This is a result of notational differences for concatenation of paths and composition of morphisms. This change could be 
easily reconciled but we choose to adhere to the conventions.
\end{remark}

The {\em parallel ideal} $I_{\cat}$ is the ideal of $\Gamma[\cat]$ generated by the relations 
$m_1 \circ \dots\circ m_n = m_1' \circ \dots \circ m_r'$ whenever the composition is also equal in $\cat $. 
It is not enough that they share domain and codomain, although of course this is necessary. The {\em incidence algebra} 
is then the quotient $\Gamma[\cat] / I_{\cat}$. This algebra may also be considered as the $k$ vector space with basis given
by the morphisms of $\cat$ and product given by composition of morphisms.

\begin{definition}\label{def:neglexordercat}
Given a labelling of the augmented acyclic category $\cat$ which satisfies the prefix condition, a 
{\em negative degree lexicographic monomial order} for $\Gamma[\cat]$ is any
negative monomial order such that the order on the monomials of equal degree is a refinement of the lexicographic order with 
respect to the labelling.
\end{definition}

From a labelling of $\cat$ we construct a negative degree lexicographic monomial order similar to Definition \ref{def:lexorder}.
We abbreviate `maximal chain of indecomposable morphisms' to `maximal chain'.

\begin{definition}\label{def:lexorderac}
 Given a lex-shelling of an augmented acyclic category $\cat$ 
 we construct a well order $<$ on the monomials of $\Gamma[\cat]$ as follows.
 We label the maximal chains by the total order given by the labelling so that $C_i$ is the $i$th maximal chain.
 To each maximal chain $C_i$ there exists a corresponding `maximal' monomial $m_i$. For any monomial $w$ 
 we define the {\em carrier} of $w$ to be to be earliest chain $C_i$ such that $w$ divides $m_i$.
 We denote the carrier of $w$ by $C_w$.

 For monomials $w$, $v$ we set $w>v$ if:
\begin{itemize}
 \item $\mathrm{deg}\,w < \mathrm{deg}\,v,$ or
 \item $\mathrm{deg}\,w = \mathrm{deg}\,v$ and $l(w) > l(v)$, or
 \item $\mathrm{deg}\,w = \mathrm{deg}\,v$ and $l(w) = l(v)$ and $C_w < C_v$, or
 \item $\mathrm{deg}\,w = \mathrm{deg}\,v$ and $l(w) = l(v)$ and $C_w = C_v$  and $\mathrm{hom}(\dom w,\dom v)\neq \emptyset$.
\end{itemize}
\end{definition}

\subsection{The main result}
\begin{thm}
The parallel ideal $I_{\cat}$ of a finite augmented acyclic category $\cat $ has a quadratic Gr\"obner basis 
with a negative degree lexicographic monomial order if and only if $\cat $ is lex-shellable.
\end{thm}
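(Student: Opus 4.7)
The plan is to mirror the proof of Theorem \ref{thm:poset} in the setting of acyclic categories, using Definition \ref{def:sbsac} in place of Definition \ref{def:sbs} and Definition \ref{def:lexorderac} in place of Definition \ref{def:lexorder}. The conceptual dictionary is: closed intervals $[x,y]$ of a poset correspond to morphisms $m$ of $\cat$; maximal chains in $[x,y]$ correspond to chains of indecomposable morphisms composing to $m$; covering relations $x \cover y$ correspond to indecomposable morphisms. Once one checks that all the required substructure behaves as it did before, the two arguments are formally identical.

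For the ($\impliedby$) direction I fix a lex-shelling and take the monomial order from Definition \ref{def:lexorderac}. Let $w$ be a monomial that is the initial term of some element of $I_{\cat}$, and assume $\deg w \geq 3$. The monomial $w$ corresponds to a chain $F_w$ of indecomposable morphisms composing to some morphism $m'$ of $\cat$, and $F_w$ is a maximal chain in $m'$ in the sense of Definition \ref{def:sbsac}. Because the negative degree lexicographic order agrees, on chains of equal length, with the lex-shelling, $F_w$ cannot be lexicographically least among chains composing to $m'$; otherwise no element of $I_{\cat}$ could pair $w$ with a smaller monomial. By the SBS-condition of Definition \ref{def:sbsac} there is a subchain $m_i \circ m_{i+1}$ of $F_w$ that is not lexicographically least among chains composing to $m_i \circ m_{i+1}$. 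The corresponding quadratic monomial divides $w$ and is the initial term of a binomial in $I_{\cat}$, so $w$ is not a minimal generator of the initial ideal. Hence the reduced Gr\"obner basis is quadratic.

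For the ($\implies$) direction I assume a negative degree lexicographic monomial order arising from some labelling satisfying the prefix condition, and that $I_{\cat}$ has a quadratic Gr\"obner basis $\mathcal{G}$. I must show that the labelling satisfies the SBS-condition. Fix a morphism $m'$ of $\cat$ and a chain $C = m_1 \circ \cdots \circ m_k$ composing to $m'$ that is not lexicographically least among such chains. The sub-quiver and its parallel relations restricted to decompositions of sub-morphisms of $m'$ form an ideal $I_{m'}$ that is generated by a subset of the generators of $I_{\cat}$; as in the poset case, the quadratic Gr\"obner basis $\mathcal{G}$ restricts to a quadratic Gr\"obner basis $\mathcal{G}_{m'}$ for $I_{m'}$, since any non-quadratic initial term appearing after restriction would have to arise from $\mathcal{G}$ itself. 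The monomial $w$ corresponding to $C$ is then the initial term of some element of $I_{m'}$, so by quadraticity of $\mathcal{G}_{m'}$ it is divisible by a quadratic initial term $q$. This $q$ corresponds to a subchain $m_i \circ m_{i+1}$ of $C$ which is not lexicographically least among decompositions of its composition, which is precisely the SBS-condition of Definition \ref{def:sbsac}.

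The only non-routine point is the restriction argument in the second direction: one must verify that the relevant sub-quiver and parallel ideal for a single morphism $m'$ behave exactly as intervals did for posets. The potential subtlety is that, for acyclic categories, two distinct morphisms can have the same domain and codomain, so one must be careful that the parallel generators we restrict are genuinely those arising from decompositions of sub-morphisms of $m'$, and not spurious generators identifying parallel but distinct compositions. Once this bookkeeping is in place, the argument proceeds exactly as for posets, and the rest is a translation of the poset proof using the dictionary above.
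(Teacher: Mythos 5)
Your proposal follows essentially the same route as the paper, which itself only sketches the argument as a direct translation of Theorem \ref{thm:poset} via the dictionary intervals $\leftrightarrow$ morphisms and maximal chains $\leftrightarrow$ decompositions into indecomposables. Your treatment is in fact more careful than the paper's sketch on the restriction step in the ($\implies$) direction, and the subtlety you flag about parallel-but-distinct morphisms is exactly the point the paper handles by defining $I_{\cat}$ via equality of compositions rather than mere agreement of domain and codomain.
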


\begin{proof}
The proof is similar to the proof of Theorem \ref{thm:poset} so we only sketch it here.

($\impliedby$) As in Theorem \ref{thm:poset}, the lex-shelling gives a monomial order for $\Gamma[\cat]_{>0}$. 
Take some element of the parallel ideal for which the initial term $f$ is at least cubic. This corresponds to
a composable chain of indecomposable morphisms. Let $m$ be the composition of this chain. Since $f$ is an initial term of some
element of the ideal the corresponding chain cannot be lexicographically least in $m$, and by the SBS-condition there 
must exists a sub-chain of length two which is not lexicographically least among chains with the same composition. Thus 
$f$ is divisible by some quadratic initial term.   

($\implies$) We assume that we have a negative degree lexicographic monomial order which gives a quadratic Gr\"obner basis
for the parallel ideal. This order arises from a labelling of $\cat$. 
The initial term of any element of the parallel ideal is divisible by a quadratic initial term, hence, similar to Theorem
\ref{thm:poset}, any chain which is not lexicographically least has a subchain of length two which is not lexicographically
in the relevant morphism, showing that this labelling satisfies the SBS-condition 
\end{proof}

\end{document}